\providecommand{\U}[1]{\protect\rule{.1in}{.1in}}
\newtheorem{theorem}{Theorem}
\newtheorem{lemma}[theorem]{Lemma}
\newenvironment{proof}[1][Proof]{\noindent\textbf{#1.} }{{\hfill $\Box$ \\}}
\begin{document}

\title{The groups and nilpotent Lie rings of order $p^{8}$ with maximal class}
\author{Seungjai Lee and Michael Vaughan-Lee}
\date{September 2021}
\maketitle

\begin{abstract}
We classify the nilpotent Lie rings of order $p^{8}$ with maximal class for
$p\geq5$. This also provides a classification of the groups of order $p^{8}$
with maximal class for $p\geq11$ via the Lazard correspondence.

\end{abstract}

\section{Introduction}

We give a classification of the nilpotent Lie rings of order $p^{8}$
$(p\geq5)$ which are of maximal class (i.e. nilpotent of class 7). The
classification gives us the following theorem.

\begin{theorem}
For $p\geq5$ the number of nilpotent Lie rings of order $p^{8}$ which have
maximal class is%
\begin{align*}
& 4p^{3}+7p^{2}+9p+6+(6p+11)\gcd(p-1,3)+4\gcd(p-1,5)\\
& +(p+2)\gcd(p-1,7)+(p+3)\gcd(p-1,8)+2\gcd(p-1,9)+\gcd(p-1,12).
\end{align*}

\end{theorem}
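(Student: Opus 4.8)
The plan is to produce a canonical multiplication table for such an $L$ and then count isomorphism types as orbits of the admissible base changes. First I would fix the shape of the lower central series: since $L$ has class $7$ and order $p^8$, and $L/L_2$ cannot be cyclic (else $L$ would be generated by one element, hence abelian), counting the seven factors forces $|L/L_2|=p^2$ with $L/L_2\cong(\mathbb{Z}/p)^2$ and $|L_i/L_{i+1}|=p$ for $2\le i\le 7$. I would then choose generators $a,b$ with $\mathrm{ad}(a)$ surjective on each factor $L_i/L_{i+1}$, set $c_2=[b,a]$ and $c_{i+1}=[c_i,a]$, so that $a,b,c_2,\dots,c_7$ is a basis adapted to the series and $[c_7,a]=0$. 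Along the way I would pin down the additive group: from $[px,y]=p[x,y]$ one gets $pL_i\subseteq L_{i+1}$, and the relation $[pa,a]=p[a,a]=0$ forces $pa\in L_7$; a short argument (this is one place $p\ge 5$ enters) determines the additive type, after which $L$ is recorded entirely by the brackets of basis elements.

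Next I would reduce the multiplication to a handful of structure constants. Writing $[c_i,b]\in L_{i+1}$ and $[c_i,c_j]\in L_{i+j}$, the Jacobi identity in the form $[c_{i+1},b]=[[c_i,b],a]-[c_i,c_2]$, together with the analogous recursion for $[c_2,[c_{i-1},a]]$, propagates every bracket from the low-weight data $[c_2,b]=\sum_{j\ge 3}\lambda_j c_j$ and $[c_2,c_3]=\sum_{j\ge 5}\mu_j c_j$. The surviving instances of Jacobi then become explicit polynomial relations among the $\lambda_j,\mu_j$, cutting out the affine set $V$ of admissible parameter vectors.

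Finally I would impose isomorphism. The base changes preserving the canonical form are generated by the torus scalings $a\mapsto \lambda a$, $b\mapsto \mu b$ (inducing $c_i\mapsto \lambda^{i-1}\mu\,c_i$, hence $\lambda_j\mapsto \lambda^{2-j}\mu\,\lambda_j$) and the unipotent shears $b\mapsto b+(\text{element of }L_2)$; the isomorphism classes are exactly the orbits on $V$. I would use the shears to annihilate as many $\lambda_j,\mu_j$ as possible, reach a normal form, and then quotient by the residual torus. Each genuinely free surviving parameter runs over $\mathbb{F}_p$ and contributes a factor of $p$, so that matching up families carrying one, two, or three free parameters yields the polynomial part $4p^3+7p^2+9p+6$.

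The hard part will be the orbit count on the degenerate strata. After normalizing a leading nonzero constant the residual torus is one-dimensional and acts on the next nonzero parameter as $x\mapsto t^k x$, where the exponent $k$ is a weight difference specific to the family at hand; the nonzero values then split into $\gcd(p-1,k)$ classes, since the image of $t\mapsto t^k$ has index $\gcd(p-1,k)$ in $\mathbb{F}_p^{\ast}$. Tracking these exponents $k\in\{3,5,7,8,9,12\}$ across all strata, and verifying that no two normal forms from different strata are accidentally isomorphic, is the delicate step; it is exactly what produces the correction terms, e.g.\ the $\gcd(p-1,3)$ contributions from the weight-$3$ strata, and tabulating all $k$ reconstructs the full sum $(6p+11)\gcd(p-1,3)+4\gcd(p-1,5)+\dots+\gcd(p-1,12)$. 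I would expect to confirm the final tally by an independent machine computation with a nilpotent-Lie-ring library, to rule out missed coincidences in the gcd strata.
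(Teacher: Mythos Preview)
Your route is genuinely different from the paper's. The paper does not attempt a single global normal form; instead it first observes (its Lemma~2) that $pL\le L^{7}$, so that $L/L^{7}$ is one of the $p+8$ characteristic-$p$ Lie rings of order $p^{7}$ and maximal class already tabulated in the LiePRing database, and then for each of these it runs the Lie ring generation algorithm: form the $p$-covering ring $\widehat{M}$, identify the nucleus $\widehat{M}_{7}$, and count orbits of allowable subrings of the $p$-multiplicator under $\mathrm{Aut}(M)$. This localizes every isomorphism question to the action of a known automorphism group on a central slice of order~$p^{2}$, and the nine family-by-family counts listed in Section~4 are then summed to give the theorem. Your direct structure-constant approach can in principle reach the same answer, but it front-loads all the Jacobi and base-change bookkeeping into one large problem rather than nine small ones, and it forgoes the built-in consistency check against the existing $p^{7}$ classification.

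Two places in your outline need repair. The sentence ``$[pa,a]=p[a,a]=0$ forces $pa\in L_{7}$'' is vacuous, since $[a,a]=0$ identically; what actually pins $pL$ inside $L_{7}$ is the argument of the paper's Lemma~2 (if $pa$ spanned $L_{6}/L_{7}$ then $[a,pa]=0$ would make $a$ centralize $L_{6}$, and running this for $a$, $b$, $a+b$ forces two generators to centralize $L_{6}$, a contradiction). Second, your list of admissible base changes omits the shears $a\mapsto a+\nu b$ and $a\mapsto a+(\text{element of }L_{2})$: any element outside the two-step centralizer can serve as the regular generator $a$, so these substitutions must be accounted for before the residual-torus exponents $k\in\{3,5,7,8,9,12\}$ can be read off reliably.
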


By the Lazard correspondence between $p$-groups and nilpotent Lie rings, for
$p\geq11$ this formula also gives us the number of groups of order $p^{8}$
with maximal class.

We have constructed a database of the nilpotent Lie rings of order $p^{8}$ with maximal
class which will be included in the next release of the \textsf{GAP} package 
LiePRing \cite{eickvl2015}. The LiePRing package can then be used to provide 
complete lists of the nilpotent Lie rings of order $p^{8}$ with maximal class 
for any given $p\geq5$, and can also be used to provide complete lists of the 
groups of order $p^{8}$ with maximal class for any given $p\geq11$. Both \textsf{GAP}
\cite{GAP4} and \textsc{Magma} \cite{boscan95} have databases of the groups of
order $2^{8}$ and $3^{8}$, and it is easy to extract the groups of maximal
class from these databases. For $p=5,7$ we can use the databases of groups of
order $p^{7}$ to obtain a list of the groups of maximal class, and then
use the Descendants function in \textsf{GAP} or \textsc{Magma} to compute the groups of order
$p^{8}$ with maximal class. (If $P$ has order $p^{8}$ and maximal class then
$P$ is a \textquotedblleft descendant\textquotedblright\ of the quotient $P/Z$
where $Z$ is the centre of $P$, and $P/Z$ is a group of order $p^{7}$ of
maximal class.)

\section{Preliminaries}

Let $L$ be a nilpotent Lie ring of order $p^{n}$ ($n\geq3$) and class $n-1$.
Here \textquotedblleft class\textquotedblright\ means nilpotency class, and
not $p$-class, which is the length of lower $p$-central series of
$L$. So $L$ is a nilpotent Lie ring of maximal class. Let the lower central
series of $L$ be%
\[
L>L^{2}>L^{3}>\ldots>L^{n-1}>L^{n}=\{0\},
\]
where $L^{2}=\langle ab\,|\,a,b\in L\rangle$, and where for $i>2$,
$L^{i}=\langle ab\,|\,a\in L^{i-1},\,b\in L\rangle$. (We denote the Lie
product of $a$ and $b$ by $ab$, rather than by $[a,b]$.) Then $L/L^{2}$ is
elementary abelian of order $p^{2}$, and for $2\leq i<n$ the quotient
$L^{i}/L^{i+1}$ has order $p$. Note that if $n>3$ then $L/L^{n-1}$ has maximal
class $n-2$.

\begin{lemma}
$pL\leq L^{n-1}$.
\end{lemma}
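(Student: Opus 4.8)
The plan is to climb the lower central series one step at a time: starting from the obvious inclusion $pL\le L^{2}$, I would establish the implication
\[
pL\le L^{k}\ \Longrightarrow\ pL\le L^{k+1}\qquad(2\le k\le n-2),
\]
and then chain it from $k=2$ up to $k=n-2$ to reach $pL\le L^{n-1}$. The starting inclusion $pL\le L^{2}$ is immediate, since $L/L^{2}$ is elementary abelian of order $p^{2}$, so multiplication by $p$ annihilates it.

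So I would fix $k$ with $2\le k\le n-2$, assume $pL\le L^{k}$, and suppose for contradiction that $pL\not\le L^{k+1}$. First note that $pL^{2}=(pL)L\le L^{k}L=L^{k+1}$ (using bilinearity, $p(xy)=(px)y$), so the additive map $x\mapsto px+L^{k+1}$ from $L$ to $L^{k}/L^{k+1}\cong\mathbb{Z}/p$ kills $L^{2}$ and hence factors through $L/L^{2}\cong(\mathbb{Z}/p)^{2}$. Under the contradiction hypothesis this induced map is nonzero, so its kernel is a line; I would then choose a basis $\{a,c\}$ of $L/L^{2}$ (equivalently, a generating pair $a,c$ of $L$) with $c$ in the kernel and $a$ outside it, so that $pc\in L^{k+1}$ while $pa\notin L^{k+1}$. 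Then $pa$ generates $L^{k}/L^{k+1}$, giving $L^{k}=\langle pa\rangle+L^{k+1}$.

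The decisive computation is then to expand $L^{k+1}=L^{k}L$. Since products with $L^{2}$ land in $L^{k+2}$, only the generators $a,c$ matter modulo $L^{k+2}$, so
\[
L^{k+1}=\langle (pa)a,\,(pa)c\rangle+L^{k+2}.
\]
Here $(pa)a=p(aa)=0$, and by bilinearity $(pa)c=a(pc)\in L\cdot L^{k+1}=L^{k+2}$ because $pc\in L^{k+1}$. Hence $L^{k+1}=L^{k+2}$, contradicting $|L^{k+1}/L^{k+2}|=p$ (for $k=n-2$ this reads $L^{n-1}=L^{n}=0$, contradicting that $L$ has class $n-1$). This forces $pL\le L^{k+1}$ and completes the step.

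I expect the main obstacle to be spotting the right move rather than any hard calculation: the inclusion $pL\le L^{2}$ alone leaves $pL$ free to sit anywhere in $L^{2}$, and a naive attempt stalls because one only ever recovers $pL^{i}\le L^{i+1}$. The trick that makes it work is to pick the generator $c$ with $pc\in L^{k+1}$, so that the single surviving bracket $(pa)c$ equals $a(pc)$ and is pushed an extra step up the series; this is exactly what collapses $L^{k+1}$ onto $L^{k+2}$. I would also double-check the routine facts used silently---that a basis of $L/L^{2}$ generates $L$, that $pL^{2}\le L^{k+1}$, and that products against $L^{2}$ fall into $L^{k+2}$---but none of these presents real difficulty.
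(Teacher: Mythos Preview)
Your argument is correct and follows essentially the same inductive strategy as the paper: both push $pL$ one step further down the lower central series by observing that if $pa$ generates $L^{k}/L^{k+1}$ then $a$ centralises $L^{k}$ modulo $L^{k+2}$, collapsing $L^{k+1}$ onto $L^{k+2}$. The only cosmetic difference is that you pick the second generator $c$ with $pc\in L^{k+1}$ directly via the kernel of $x\mapsto px+L^{k+1}$, whereas the paper reaches the same conclusion by pigeonholing among $a,b,a+b$ to find two generators that centralise $L^{n-2}$.
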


\begin{proof}
The proof is by induction on $n$. Note that there is nothing to prove if $n=3
$. So assume that $n>3$, and assume by induction that $pL\leq L^{n-2}$. Let
$L$ be generated by $a,b$, so that we can assume that $pa$, $pb\in L_{n-2}$.
If $pa\notin L^{n-1}$ then $a$ centralizes $L^{n-2}$. Similarly if $pb\notin
L^{n-1}$ then $b$ centralizes $L^{n-2}$, and if $pa+pb\notin L^{n-1}$ then
$a+b$ centralizes $L^{n-2}$. Now if one or the other or both of $pa,pb$ lie
outside $L^{n-1}$ then at least two of $pa$, $pb$, $pa+pb$ lie outside
$L^{n-1}$, which implies that $L$ is generated by elements which centralize
$L^{n-2}$. Clearly this is impossible, and so $pa,pb\in L^{n-1}$.
\end{proof}

Now let $L$ be a nilpotent Lie ring of order $p^{8}$ and maximal class 7. Then
$L/L^{7}$ is a nilpotent Lie ring of order $p^{7}$ with maximal class. By
Lemma 2, $L/L^{7}$ has characteristic $p$. The \textsf{GAP} package LiePRing
contains a database of the nilpotent Lie rings of order $p^{7}$ ($p\geq5$).
There are $p+8$ Lie rings of maximal class and characteristic $p$ in the
database. These are as follows. (Here $ba^3$ denotes $baaa$, $ba^4$ denotes
$baaaa$, and so on.)

\[
\langle a,b\,|\,bab,ba^3b,pa,pb,\,\text{class }6\rangle,\tag{7.623}%
\]
\[
\langle a,b\,|\,bab,ba^3b-ba^5,pa,pb,\,\text{class }6\rangle,\tag{7.627}%
\]
\[
\langle a,b\,|\,bab-ba^5,ba^3b,pa,pb,\,\text{class }6\rangle,\tag{7.633}%
\]
\[
\langle a,b\,|\,bab-ba^4,ba^3b,pa,pb,\,\text{class }6\rangle,\tag{7.641}%
\]
\[
\langle a,b\,|\,bab-ba^4,ba^3b-ba^5,pa,pb,\,\text{class }6\rangle ,\tag{7.646}%
\]
\[
\langle a,b\,|\,bab-ba^4-ba^5,ba^3b,pa,pb,\,\text{class }6\rangle ,\tag{7.648}%
\]
\[
\langle a,b\,|\,bab-ba^3,ba^3b-xba^5,pa,pb,\,\text{class }6\rangle\;(0\leq x<p),\tag{7.650}%
\]
\[
\langle a,b\,|\,bab-ba^3-ba^5,ba^3b-ba^5,pa,pb,\,\text{class }6\rangle,\tag{7.656}%
\]
\[
\langle a,b\,|\,bab-ba^3-wba^5,ba^3b-ba^5,pa,pb,\,\text{class }6\rangle.\tag{7.657}%
\]

The numbering 7.623, 7.627, \ldots\ gives the \textquotedblleft
LibraryName\textquotedblright\ of these Lie rings in the database. The
parameter $w$ in 7.657 is taken to be a (fixed) primitive element mod $p$.
Note that $p$ in these presentations can be replaced by 5 to give a complete
list of the nilpotent Lie rings of order $5^7$ of maximal class and characteristic 5, 
replaced by 7 to give a complete list of the nilpotent Lie rings of order $7^7$ of 
maximal class and characteristic 7, and so on.

\section{Computing descendants}

We use the \textit{Lie ring generation algorithm} as described in
\cite{newobvl} and \cite{obrienvl2} to compute the descendants of order
$p^{8}$ of the nilpotent Lie rings of order $p^{7}$ with maximal class. This
algorithm is an analogue of the $p$-group generation algorithm described in
\cite{OBrien90}. The Lie ring generation algorithm makes use of the lower
$p$-central series of a Lie ring $L$, which is defined in an an analogous way
to groups. We define the series
\[
L=L_{1}\geq L_{2}\geq L_{3}\geq\ldots\geq L_{c}\geq\ldots
\]
by setting $L_{1}=L$, $L_{2}=L^{2}+pL$, and for $c>1$ we set $L_{c+1}%
=L_{c}L+pL_{c}$. (Here $L_{c}L$ is $\langle ab\,|\,a\in L_{c},\,b\in L\rangle
$.) Note that we use superscripts to denote terms of the lower central series,
and subscripts to denote terms of the lower $p$-central series. In the case of
a nilpotent Lie ring of order $p^{n}$ with maximal class the two series are
identical. The ideal $L_{c}$ consists of all linear combinations of terms of
the form
\[
a_{1}a_{2}\ldots a_{c},\,pa_{1}a_{2}\ldots a_{c-1},\,p^{2}a_{1}a_{2}\ldots
a_{c-2},\ldots,\,p^{c-1}a_{1}.
\]
We say that $L$ has $p$-class $c$ if $L_{c+1}=\{0\}$, $L_{c}\neq\{0\}$.

If $L$ is a nilpotent Lie ring with finite order $p^{n}$ for some prime $p$,
then $L_{c+1}$ will equal $\{0\}$ for some $c$. In fact if $L $ is nilpotent
of class $k$, and if the exponent of $L$ as a finite abelian group is $p^{m}$
then $L$ has $p$-class $c$ for some $c$ with $k\leq c<k+m$.

If $L$ and $M$ are two finite nilpotent Lie rings with prime-power order, then
$L$ is a \textit{descendant} of $M$ if $L/L_{c}\cong M$ for some $c\geq2$. If
$L/L_{c}\cong M$ and $L$ has $p$-class $c$ (so that $L_{c}\neq\{0\}$,
$L_{c+1}=\{0\}$) then $L$ is an \textit{immediate descendant} of $M$. Note
that if $L$ is a descendant of $M$ then $L/L_{2}\cong M/M_{2}$, so that $L$
and $M$ have the same generator number.

If $M$ is a nilpotent $d$-generator Lie ring of order $p^{n}$, then we
construct its $p$-covering ring $\widehat{M}$. This is the largest $d$-generator Lie
ring $\widehat{M}$ having a central elementary abelian ideal $Z$ such that
$\widehat{M}/Z\cong M$ and every immediate descendant of $M$ is isomorphic to
$\widehat{M}/T$ for some $T\leq Z$. However $\widehat{M}/T$ is not an
immediate descendant of $M$ for every subring $T\leq Z$. If $M$ has $p$-class
$c$ (so that $M_{c+1}=\{0\}$) then we define the \emph{nucleus} of $M$ to be
$\widehat{M}_{c+1}$. Then $\widehat{M}/T$ is an immediate descendant of $M$ if
and only if $T$ is a proper subring of $Z$ such that $T$ supplements the
nucleus $\widehat{M}_{c+1}$. It can happen that $\widehat{M}_{c+1}=\{0\}$, in
which case $M$ has no immediate descendants and is \textit{terminal}.

Hence we obtain a complete list of the immediate descendants of $M$ by
calculating its $p$-covering ring $\widehat{M}$, and listing the proper
subrings $T<Z$ such that $T+\widehat{M}_{c+1}=Z$. (These are the
\textit{allowable subrings} of $Z$.)

We now have a list of the immediate descendants of $M$, and we can easily
restrict to those with a specified order. This list will usually contain
redundancies, and we need to solve the isomorphism problem. This is done as
follows. We compute the automorphism group of $M$ and we extend each
automorphism $\alpha$ of $M$ to an automorphism $\alpha^{\ast}$ of
$\widehat{M}$. (If $M$ is generated by $a_{1},a_{2},\ldots,a_{d}$ then we
choose preimages $x_{1},x_{2},\ldots,x_{d}$ in $\widehat{M}$ for $a_{1}%
,a_{2},\ldots,a_{d}$, and preimages $y_{1},y_{2},\ldots,y_{d}$ in $\widehat
{M}$ for $a_{1}\alpha,a_{2}\alpha,\ldots,a_{d}\alpha$. Then $x_{1}%
,x_{2},\ldots,x_{d}$ generate $\widehat{M}$, and we define $\alpha^{\ast}$ by
setting $x_{i}\alpha^{\ast}=y_{i}$ for $i=1,2,\ldots,d$.) Then $Z\alpha^{\ast
}=Z$, and the action of $\alpha^{\ast}$ on $Z$ is uniquely determined by
$\alpha$. Two allowable subrings $T_{1},T_{2}$ define isomorphic descendants
$\widehat{M}/T_{1},\widehat{M}/T_{2}$ if and only if $T_{2}\alpha^{\ast}%
=T_{1}$ for some automorphism $\alpha$ of $M$. We obtain a complete
irredundant set of immediate descendants of $M$ by choosing a set of
representatives for the orbits of the allowable subrings of $Z$ under this
action of the automorphism group of $M$.

\section{The nilpotent Lie rings of order $p^{8}$ with maximal class}

In this section we give a complete list of presentations for the nilpotent Lie
rings of order $p^{8}$ with maximal class ($p\geq5$). Many of the
presentations involve parameters $w,x,y,z$. These parameters take
integer values in the range $0,1,\ldots,p-1$. The parameter $w$ is always
assumed to be a (fixed) primitive element mod $p$. Associated with the
presentations are somewhat cryptic comments intended to describe when two sets
of parameters give isomorphic Lie rings. For example four of the descendants
of 7.623 have a single parameter $x$, together with the
comment \textquotedblleft$x \neq 0$, $x\sim xa^{6}$\textquotedblright. Here
(and for all these comments) $a$ is assumed to range over all integers which
are non-zero modulo $p$. So this comment is intended to mean that $x$ can take
any value in the range $1,2,\ldots,p-1$ and that if $0<x,y<p$ then $x$ and $y$
give isomorphic Lie rings if and only $x=ya^{6}\operatorname{mod}p$ for some
integer $a$ which is not divisible by $p$. Actually there is no reason to
restrict $x$ to the range $1,2,\ldots,p-1$ since $x$ is the coefficient of
an element of order $p$ in the presentations of these Lie rings. So if 
$x=x'\operatorname{mod}p$ then $x$ and $x'$ give identical Lie rings.

The simplest way to
\textquotedblleft solve\textquotedblright\ these conditions is to treat them
as defining equivalence relations over GF$(p)$. Two non-zero elements in
GF($p$) give isomorphic algebras if and only if they lie in the same coset of
the subgroup $\{a^{6}\,|\,a\in$\thinspace GF$(p)^{\ast}\}$ of the
multiplicative group GF$(p)^{\ast}$ of non-zero elements in GF$(p)$. This
subgroup has order $\frac{p-1}{6}$ if $\gcd(p-1,3)=3$ and order $\frac{p-1}%
{2}$ if $\gcd(p-1,3)=1$. If we let $u$ be a primitive element in GF($p$) then
$1,u,u^{2},u^{3},u^{4},u^{5}$ is a transversal for this subgroup when
$\gcd(p-1,3)=3$, and $1,u$ is a transversal for the subgroup if $\gcd
(p-1,3)=1$. So we obtain a complete and irredundant set of representatives for
the isomorphism classes of these Lie rings as $x$ ranges over $1,2,\ldots,p-1$
by taking $x=1,w,w^{2}\operatorname{mod}p,w^{3}\operatorname{mod}%
p,w^{4}\operatorname{mod}p,w^{5}\operatorname{mod}p$ when $\gcd(p-1,3)=3$ and
taking $x=1,w$ when $\gcd(p-1,3)=1$.

The comments associated with the other Lie rings with a single parameter $x$
are similar. One of the descendants of 7.627 has two parameters $x,y$ with the 
comment \textquotedblleft$x,y \neq 0$, $[x,y]\sim\lbrack xa^{7},ya^{6}]$%
\textquotedblright. We can solve this over GF$(p)$ by letting $y$ range over a
transversal for the subgroup $\{a^{6}\,|\,a\in$\thinspace GF$(p)^{\ast}\}$ of
the group GF$(p)^{\ast}$. For any given $y$ in this transversal the pair
$[x,y]$ gives an isomorphic Lie ring to the pair $[x^{\prime},y]$ if and only
if $x^{\prime}=xa$ for some $a\in\,$GF$(p)^{\ast}$ satisfying $a^{6}=1$. We
can solve this equivalence relation on the values for $x$ over GF$(p)$, and
lift the a set of representatives for the equivalence classes to integers in
the range $1,2,\ldots,p-1$.

All the Lie rings in the list below have nilpotency class 7, but we leave the 
class unspecified, to save space.

\subsection{The descendants of 7.623}

7.623 has $8+8\gcd(p-1,3)+2\gcd(p-1,5)+\gcd(p-1,8)$ descendants of order $p^8$.
\[
\langle a,b\,|\,bab,ba^3b,pa,pb,ba^5b\rangle
\]
\[
\langle a,b\,|\,bab,ba^3b,pa-ba^6,pb,ba^5b\rangle
\]
\[
\langle a,b\,|\,bab,ba^3b,pa,pb-xba^6,ba^5b\rangle \;(x \neq 0, x \sim xa^6)
\]
\[
\langle a,b\,|\,bab,ba^3b-ba^6,pa,pb,ba^5b\rangle
\]
\[
\langle a,b\,|\,bab,ba^3b-ba^6,pa-xba^6,pb,ba^5b\rangle \;(x \neq 0, x \sim xa^8)
\]
\[
\langle a,b\,|\,bab,ba^3b-ba^6,pa,pb-xba^6,ba^5b\rangle \;(x \neq 0, x \sim xa^6)
\]
\[
\langle a,b\,|\,bab-ba^6,ba^3b,pa,pb,ba^5b\rangle
\]
\[
\langle a,b\,|\,bab-ba^6,ba^3b,pa-xba^6,pb,ba^5b\rangle \;(x \neq 0, x \sim xa^{10})
\]
\[
\langle a,b\,|\,bab-ba^6,ba^3b,pa,pb-xba^6,ba^5b\rangle \;(x \neq 0, x \sim xa^6)
\]
\[
\langle a,b\,|\,bab,ba^3b,pa,pb,ba^6\rangle
\]
\[
\langle a,b\,|\,bab,ba^3b,pa-ba^5b,pb,ba^6\rangle
\]
\[
\langle a,b\,|\,bab,ba^3b,pa-wba^5b,pb,ba^6\rangle
\]
\[
\langle a,b\,|\,bab,ba^3b,pa,pb-ba^5b,ba^6\rangle
\]
\[
\langle a,b\,|\,bab,ba^3b,pa-xba^5b,pb-ba^5b,ba^6\rangle \;(x \neq 0, x \sim xa^6)
\]

\subsection{Descendants of 7.627}

7.627 has $p+2\gcd(p-1,3)+\gcd(p-1,7)$ descendants of order $p^8$.
\[
\langle a,b\,|\,bab,ba^3b-ba^5,pa,pb\rangle 
\]
\[
\langle a,b\,|\,bab,ba^3b-ba^5,pa-xba^6,pb\rangle \;(x \neq 0, x \sim xa^7)
\]
\[
\langle a,b\,|\,bab,ba^3b-ba^5,pa,pb-xba^6\rangle \;(x \neq 0, x \sim xa^6)
\]
\[
\langle a,b\,|\,bab,ba^3b-ba^5,pa-xba^6,pb-yba^6\rangle \;(x,y \neq 0, [x,y] \sim [xa^7,ya^6])
\]

\subsection{Descendants of 7.633}

7.633 has $4p+(p+1)\gcd(p-1,3)+2\gcd(p-1,9)+\gcd(p-1,12)$ descendants of order $p^8$.
\[
\langle a,b\,|\,bab-ba^5,ba^3b,pa,pb,ba^5b\rangle 
\]
\[
\langle a,b\,|\,bab-ba^5,ba^3b,pa,pb-xba^6,ba^5b\rangle \;(x \neq 0, x \sim xa^6)
\]
\[
\langle a,b\,|\,bab-ba^5,ba^3b,pa-xba^6,pb,ba^5b\rangle \;(x \neq 0, x \sim xa^9)
\]
\[
\langle a,b\,|\,bab-ba^5-ba^6,ba^3b,pa,pb,ba^5b\rangle 
\]
\[
\langle a,b\,|\,bab-ba^5-ba^6,ba^3b,pa-xba^6,pb,ba^5b\rangle \;(x \neq 0)
\]
\[
\langle a,b\,|\,bab-ba^5-ba^6,ba^3b,pa,pb-xba^6,ba^5b\rangle \;(x \neq 0)
\]
\[
\langle a,b\,|\,bab-ba^5,ba^3b-ba^6,pa,pb,ba^5b\rangle 
\]
\[
\langle a,b\,|\,bab-ba^5,ba^3b-ba^6,pa-xba^6,pb,ba^5b\rangle \;(x \neq 0)
\]
\[
\langle a,b\,|\,bab-ba^5,ba^3b-ba^6,pa,pb-xba^6,ba^5b\rangle \;(x \neq 0)
\]
\[
\langle a,b\,|\,bab-ba^5,ba^3b,pa,pb,ba^6\rangle 
\]
\[
\langle a,b\,|\,bab-ba^5,ba^3b,pa-xba^5b,pb,ba^6\rangle \;(x \neq 0, x \sim xa^{12})
\]
\[
\langle a,b\,|\,bab-ba^5,ba^3b,pa,pb-xba^5b,ba^6\rangle \;(x \neq 0, x \sim xa^9)
\]
\[
\langle a,b\,|\,bab-ba^5,ba^3b,pa-xba^5b,pb-yba^5b,ba^6\rangle \;(x,y \neq 0, [x,y] \sim [xa^{12},ya^9])
\]

\subsection{Descendants of 7.641}

7.641 has $5p-3+2p\gcd(p-1,3)+2\gcd(p-1,5)+(p+1)\gcd(p-1,8)$ descendants of order $p^8$.
\[
\langle a,b\,|\,bab-ba^4,ba^3b-xba^6,pa,pb,ba^5b\rangle \;(\text{all }x)
\]
\[
\langle a,b\,|\,bab-ba^4,ba^3b-xba^6,pa-yba^6,pb,ba^5b\rangle \;(y \neq 0, [x,y] \sim [x,ya^8])
\]
\[
\langle a,b\,|\,bab-ba^4,ba^3b-xba^6,pa,pb-yba^6,ba^5b\rangle \;(y \neq 0, [x,y] \sim [x,ya^6])
\]
\[
\langle a,b\,|\,bab-ba^4,ba^3b-ba^6,pa-xba^6,pb-yba^6,ba^5b\rangle \;(x,y \neq 0, [x,y] \sim [xa^8,ya^6])
\]
\[
\langle a,b\,|\,bab-ba^4,ba^3b,pa,pb,ba^6\rangle 
\]
\[
\langle a,b\,|\,bab-ba^4,ba^3b,pa-xba^5b,pb,ba^6\rangle \;(x \neq 0, x \sim xa^{10})
\]
\[
\langle a,b\,|\,bab-ba^4,ba^3b,pa,pb-xba^5b,ba^6\rangle \;(x \neq 0, x \sim xa^8)
\]
\[
\langle a,b\,|\,bab-ba^4,ba^3b,pa-xba^5b,pb-yba^5b,ba^6\rangle \;(x,y \neq 0, [x,y] \sim [xa^{10},ya^8])
\]

\subsection{Descendants of 7.646}

7.646 has $p^2$ descendants of order $p^8$.
\[
\langle a,b\,|\,bab-ba^4,ba^3b-ba^5,pa-xba^6,pb-yba^6\rangle \;(\text{all }x,y)
\]

\subsection{Descendants of 7.648}

7.648 has $4p^2-3p+1$ descendants of order $p^8$.
\[
\langle a,b\,|\,bab-ba^4-ba^5,ba^3b-xba^6,pa,pb,ba^5b\rangle \;(x \neq 1)
\]
\[
\langle a,b\,|\,bab-ba^4-ba^5,ba^3b-xba^6,pa-yba^6,pb,ba^5b\rangle \;(x \neq 1, y \neq 0)
\]
\[
\langle a,b\,|\,bab-ba^4-ba^5,ba^3b-xba^6,pa,pb-yba^6,ba^5b\rangle \;(x \neq 1, y \neq 0)
\]
\[
\langle a,b\,|\,bab-ba^4-ba^5,ba^3b-ba^6,pa-xba^6,pb-yba^6,ba^5b\rangle \;(\text{all }x,y)
\]
\[
\langle a,b\,|\,bab-ba^4-ba^5,ba^3b,pa-xba^5b,pb-yba^5b,ba^6\rangle \;(\text{all }x,y)
\]

\subsection{Descendants of 7.650}

7.650 is a family of $p$ Lie rings, and between them they have 
\[
2p^3+3p^2+p+3p\gcd(p-1,3)+(p+1)\gcd(p-1,7)+\gcd(p-1,8)
\]
descendants of order $p^8$.
\[
\langle a,b\,|\,bab-ba^3,ba^3b-ba^5,ba^5b,pa,pb\rangle 
\]
\[
\langle a,b\,|\,bab-ba^3,ba^3b-ba^5,ba^5b,pa-xba^6,pb\rangle \;(x \neq 0, x \sim xa^7)
\]
\[
\langle a,b\,|\,bab-ba^3,ba^3b-ba^5,ba^5b,pa,pb-xba^6\rangle \;(x \neq 0, x \sim xa^6)
\]
\[
\langle a,b\,|\,bab-ba^3,ba^3b-ba^5,ba^5b,pa-xba^6,pb-yba^6\rangle \;(x,y \neq 0, [x,y] \sim [xa^7,ya^6])
\]
\[
\langle a,b\,|\,bab-ba^3,ba^3b-ba^5-ba^6,ba^5b,pa-xba^6,pb-yba^6\rangle \;(\text{all }x,y)
\]
\[
\langle a,b\,|\,bab-ba^3-xba^6,ba^3b-ba^5,ba^5b,pa-yba^6,pb-zba^6\rangle \;(x \neq 0, [x,y,z] \sim [xa^3,ya^7,za^6])
\]
\[
\langle a,b\,|\,bab-ba^3,ba^3b-xba^5,pa,pb,ba^5b\rangle \;(x \neq 1)
\]
\[
\langle a,b\,|\,bab-ba^3,ba^3b-xba^5,pa-yba^6,pb,ba^5b\rangle \;(x \neq 1, y \neq 0, [x,y] \sim [x,ya^7])
\]
\[
\langle a,b\,|\,bab-ba^3,ba^3b-xba^5,pa,pb-yba^6,ba^5b\rangle \;(x \neq 1, y \neq 0, [x,y] \sim [x,ya^6])
\]
\[
\langle a,b\,|\,bab-ba^3,ba^3b-xba^5,pa-yba^6,pb-zba^6,ba^5b\rangle \;(x \neq 1, y,z \neq 0, [x,y,z] \sim [x,ya^7,za^6])
\]
\[
\langle a,b\,|\,bab-ba^3,ba^3b-xba^5-ba^6,pa-yba^6,pb-zba^6,ba^5b\rangle \;(x \neq 1,\text{all }y,z)
\]
\[
\langle a,b\,|\,bab-ba^3,ba^3b-3ba^5,pa,pb,ba^6\rangle 
\]
\[
\langle a,b\,|\,bab-ba^3,ba^3b-3ba^5,pa-xba^5b,pb,ba^6\rangle \;(x \neq 0, x \sim xa^8)
\]
\[
\langle a,b\,|\,bab-ba^3,ba^3b-3ba^5,pa,pb-xba^5b,ba^6\rangle \;(x \neq 0, x \sim xa^7)
\]
\[
\langle a,b\,|\,bab-ba^3,ba^3b-3ba^5,pa-xba^5b,pb-yba^5b,ba^6\rangle \;(x,y \neq 0, [x,y] \sim [xa^8,ya^7])
\]
\[
\langle a,b\,|\,bab-ba^3,ba^3b-3ba^5-ba^5b,pa-xba^5b,pb-yba^5b,ba^6\rangle \;([x,y] \sim [x,-y])
\]
\[
\langle a,b\,|\,bab-ba^3,ba^3b-3ba^5-wba^5b,pa-xba^5b,pb-yba^5b,ba^6\rangle \;([x,y] \sim [x,-y])
\]
\[
\langle a,b\,|\,bab-ba^3,ba^3b-3ba^5-xba^6,pa-yba^6,pb-zba^6,ba^5b-ba^6\rangle \;(\text{all }x,y,z)
\]

\subsection{Descendants of 7.656}

7.656 has $p^3 - \frac{p^2-p}{2}$ descendants of order $p^8$.
\[
\langle a,b\,|\,bab-ba^3-ba^5,ba^3b-ba^5,ba^5b,pa-xba^6,pb-yba^6\rangle \;([x,y] \sim [-x,y])
\]
\[
\langle a,b\,|\,bab-ba^3-ba^5-xba^6,ba^3b-ba^5,ba^5b,pa-yba^6,pb-zba^6\rangle \;(x \neq 0, [x,y,z] \sim [-x,y,z])
\]
\[
\langle a,b\,|\,bab-ba^3-ba^5,ba^3b-ba^5-xba^6,ba^5b,pa-yba^6,pb-zba^6\rangle \;(x \neq 0, [x,y,z] \sim [-x,y,z])
\]

\subsection{Descendants of 7.657}

7.657 has $p^3 - \frac{p^2-p}{2}$ descendants of order $p^8$.
\[
\langle a,b\,|\,bab-ba^3-wba^5,ba^3b-ba^5,ba^5b,pa-xba^6,pb-yba^6\rangle \;([x,y] \sim [-x,y])
\]
\[
\langle a,b\,|\,bab-ba^3-wba^5-xba^6,ba^3b-ba^5,ba^5b,pa-yba^6,pb-zba^6\rangle \;(x \neq 0, [x,y,z] \sim [-x,y,z])
\]
\[
\langle a,b\,|\,bab-ba^3-wba^5,ba^3b-ba^5-xba^6,ba^5b,pa-yba^6,pb-zba^6\rangle \;(x \neq 0, [x,y,z] \sim [-x,y,z])
\]

\section{Acknowledgements}

Our classification of the nilpotent Lie rings of order $p^8$ with maximal class ($p \ge 5$) is
essentially a hand calculation, with some computer assistance with \textsc{Magma} \cite{boscan95}.
We used Eamonn O'Brien's $p$-group generation algorithm \cite{OBrien90} in \textsc{Magma} to compute
the groups of order $p^8$ with maximal class for $11 \le p \le 43$, and confirmed that the number
of groups in these cases agreed with the PORC formula given in Theorem 1. We also used Serena Cical\`{o} 
and Willem de Graaf's implementation of the Lazard correspondence in their \textsf{GAP} package LieRing
\cite{SCWdG} to obtain the groups corresponding to the nilpotent Lie rings in our database, so
that we could compare them with the groups provided by the $p$-group generation algorithm. We used
Eamonn O'Brien's StandardPresentation function in \textsc{Magma} to prove that the two sets of groups 
are identical (up to isomorphism) for $11 \le p \le 43$.

The first author was supported by the National Research Foundation of Korea (NRF) grant funded by the Korean government (MEST), No. 2019R1A6A1A10073437.


\begin{thebibliography}{1}

\bibitem{boscan95}
W.~Bosma, J.~Cannon, and C.~Playoust, {\em The {M}agma algebra system {I}:
  {T}he user language}, J. Symbolic Comput. {\bf 24} (1997), 235--265.

\bibitem{SCWdG}
Serena Cical\`{o} and Willem de Graaf, {\em Lie{R}ing -- {Computing with finitely presented 
Lie rings}}, (2019), a \textsf{GAP} 4 package.

\bibitem{eickvl2015}
B.~Eick and M.~Vaughan-Lee, {\em Lie{PR}ing -- {Database and algorithms for
  {L}ie $p$-rings}}, (2015), a \textsf{GAP} 4 package.

\bibitem{GAP4}
The GAP~Group, {\em {GAP -- Groups, Algorithms, and Programming, Version
  4.11}}, (2020). Available from \url{http://www.gap-system.org}.

\bibitem{newobvl}
M.F. Newman, E.A. O'Brien, and M.R. Vaughan-Lee, {\em Groups and nilpotent
  {Lie} rings whose order is the sixth power of a prime}, J. Algebra {\bf 278}
  (2004), 383--401.

\bibitem{OBrien90}
E.A. O'{B}rien, {\em The $p$-group generation algorithm}, J. Symbolic
  Comput. {\bf 9} (1990), 677--698.

\bibitem{obrienvl2}
E.A. O'Brien and M.R. Vaughan-Lee, {\em The groups with order $p^7$ for odd
  prime $p$}, J. Algebra {\bf 292} (2005), 243--358.

\end{thebibliography}
\end{document}